   \def\D{\Delta}
\def\e{\epsilon}
 \def\om{\omega}
\def\A{\forall}
\newtheorem{theorem}{Theorem}
\newtheorem{lemma}[theorem]{Lemma}
\newtheorem{corollary}[theorem]{Corollary}
\newtheorem{Remark}{Remark}
\newcommand{\brac}[1]{\left(#1\right)}
\newcommand{\bfrac}[2]{\left(\frac{#1}{#2}\right)}
\newcommand{\ra}{\rightarrow}
\newcommand{\set}[1]{\left\{#1\right\}}
\def\E{\mathbb{E}}
\def\Pr{\mathbb{P}}
\newcommand{\ignore}[1]{}
\def\cL{{\mathcal L}}
\def\cO{{\mathcal O}}
\newcommand{\beq}[2]{\begin{equation}\label{#1}#2\end{equation}}
\newcommand{\kk}[2]{k^{(#1)}_{#2}}
\def\N{\mathbb{N}}
\begin{document}
	
\author{Alan Frieze\thanks{Research supported in part by NSF grant DMS1952285} \ and Aditya Raut\\Department of Mathematical Sciences\\Carnegie Mellon University\\Pittsburgh PA-15213}
\date{}
\title{The maximum degree of the $r$th power of a sparse random graph}
\maketitle

\begin{abstract}
Let $G^r_{n,p}$ denote the $r$th power of the random graph $G_{n,p}$, where $p=c/n$ for a positive constant $c$. We prove that w.h.p. the maximum degree $\D\left(G^r_{n,p}\right)\sim \frac{\log n}{\log_{(r+1)}n}$. Here $\log_{(k)}n$ indicates the repeated application of the log-function $k$ times. So, for example, $\log_{(3)}n=\log\log\log n$. 
\end{abstract}

\section{Introduction}

The $r$th power $G^r$ of a graph $G$ is obtained from $G$ by adding edges for all pairs of vertices at distance $r$ or less from each other. Powers of graphs arise naturally in various contexts, e.g. in the study of Shannon capacity. In the context of random graphs, there has been little research. 

Let $\log_{(k)} n$ indicate the repeated application of the log-function $k$ times.  So, for example, $\log_{(3)} n = \log\log\log n$. 

One basic property of a class of graphs is their degree sequence. In particular, the maximum degree is of particular interest. Garapaty, Lokshtanov, Maji and Pothen \cite{GLMP} proved that if $p=c/n$ where $c>0$ is constant, then w.h.p. the maximum degree $\D(G_{n,p}^r)=\Theta_r\left(\frac{\log n}{\log_{(r+1)} n}\right)$. The hidden multiplicative factor being in the range $[0.05\cdot 2^{-r}, 6]$. We strengthen this and prove

\begin{theorem}\label{th1}
Let $p=c/n$, where $c>0$ is a constant and let $r\geq 2$ be a fixed positive integer. Then, w.h.p. $\D(G_{n,p}^r) \sim\frac{\log n}{\log_{(r+1)} n}$.
\end{theorem}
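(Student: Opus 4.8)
Write $B_r(v)$ for the set of vertices at distance at most $r$ from $v$ in $G=G_{n,p}$, so that the degree of $v$ in $G^r$ is $|B_r(v)|-1$ and $\D(G^r_{n,p})=\max_v|B_r(v)|-1$; it suffices to show $\max_v|B_r(v)|\sim\frac{\log n}{\log_{(r+1)}n}$ w.h.p. Since $p=c/n$, the breadth‑first exploration from $v$ couples with a Galton--Watson process: conditioned on $o(n)$ vertices having been exposed, the number joining level $i+1$ is $\text{Bin}\big(n-o(n),\,1-(1-p)^{|L_i|}\big)$, squeezed between sums of $|L_i|$ i.i.d.\ $\text{Poisson}\big(c(1\mp o(1))\big)$ variables. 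Thus everything reduces to the estimate
\begin{equation}\label{E:star}
f_r(k):=-\log\Pr\big[|B_r(v)|\ge k\big]=(1+o(1))\,k\,\log_{(r)}k,\qquad k\to\infty,\ k=n^{o(1)}.
\end{equation}
Indeed, at $k=(1\pm\e)\tfrac{\log n}{\log_{(r+1)}n}$ one has $\log_{(r)}k=(1+o(1))\log_{(r+1)}n$, hence $f_r(k)=(1\pm\e)(1+o(1))\log n$: the upper bound of Theorem~\ref{th1} then follows from $\Pr[\D(G^r_{n,p})\ge k]\le n\,e^{-f_r(k)}\to0$ with $k=(1+\e)\tfrac{\log n}{\log_{(r+1)}n}$, and the lower bound from a second moment (below).

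To prove \eqref{E:star} I would first set up the recursion it obeys. Exposing the root of the Galton--Watson tree first gives $|B_r(v)|=1+\sum_{j=1}^N|B^{(j)}_{r-1}|$, and splitting the required total $k$ evenly among the $N$ subtrees is optimal (the rate function $a\mapsto f_{r-1}(a)$ is convex with $f_{r-1}(a)/a$ increasing), so
\[
f_r(k)=\min_{m\ge1}\big[\,h(m)+m\,f_{r-1}(k/m)\,\big],\qquad
h(m):=-\log\Pr[\text{Poisson}(c)\ge m]=(1+o(1))m\log m,
\]
with $f_1(k)=h(k-1)=(1+o(1))k\log k$. Putting $m=k/\mu$, the bracket equals $(1+o(1))\big[(k/\mu)\log(k/\mu)+k\log_{(r-1)}\mu\big]$, which is minimized at $\mu\sim\prod_{j=1}^{r-1}\log_{(j)}k$ and there equals $(1+o(1))k\log_{(r)}k$; this is the inductive step. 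The minimizer corresponds to the degree profile $d_0\sim k/\prod_{j=1}^{r-1}\log_{(j)}k$ at the root and $d_i\sim\log_{(i)}k$ at level $i$ for $1\le i\le r-1$, whose cost is dominated by the deepest level, $k\log d_{r-1}\sim k\log_{(r)}k$, the shallower levels contributing only $O(k)$. It is essential that this is \emph{not} a balanced tree: taking all $d_i$ equal would give merely $\Theta_r\!\big(\log n/\log_{(2)}n\big)$, far below the truth.

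For the ``$f_r\ge(1-o(1))k\log_{(r)}k$'' half of \eqref{E:star} I would use exponential moments. For the dominating Galton--Watson tree put $\phi_r(z)=\E\big[z^{|B_r(v)|}\big]$; since the offspring law is Poisson, $\phi_0(z)=z$ and $\phi_r(z)=z\,e^{c(\phi_{r-1}(z)-1)}$, so $\phi_r$ is finite for every $z$ and, being an $r$‑fold iterated exponential, satisfies $\log_{(r)}\phi_r(z)\sim cz$. Choosing $z_n$ with $\phi_r(z_n)=\exp\!\big(\log n/\log_{(2)}n\big)$ forces $\log z_n\sim\log_{(r+1)}n$, and Markov's inequality gives, for $k=(1+\e)\tfrac{\log n}{\log_{(r+1)}n}$,
\[
\Pr\big[|B_r(v)|\ge k\big]\le z_n^{-k}\phi_r(z_n)
=\exp\!\big(-(1+\e)(1+o(1))\log n+o(\log n)\big)=n^{-1-\e+o(1)},
\]
whence $\Pr[\D(G^r_{n,p})\ge k]\le n^{-\e+o(1)}\to0$.

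For the lower bound I would run the second moment method on $X=|\{v:|B_r(v)|\ge k\}|$ with $k=(1-\e)\tfrac{\log n}{\log_{(r+1)}n}$. Running BFS from $v$ and \emph{conditioning} the level sizes to equal $\ell_i^\ast:=\prod_{j<i}d_j$ along the extremal profile, each conditional probability is at least $\exp\!\big(-(1+o(1))\ell_i^\ast\log d_{i-1}\big)$ (lower tail of a binomial), while the exposed set never exceeds $\text{polylog}(n)$ vertices; multiplying these yields $\Pr[|B_r(v)|\ge k]\ge e^{-(1+o(1))k\log_{(r)}k}=n^{-1+\e-o(1)}$, so $\E X\ge n^{\e/2}$. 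For the variance, expose $B_{2r}(v)$: any $w\notin B_{2r}(v)$ then grows $B_r(w)$ inside a still‑unexposed copy of $G_{n',p}$ with $n'=(1-o(1))n$, so $\Pr[|B_r(v)|\ge k,\,|B_r(w)|\ge k]\le(1+o(1))\Pr[|B_r(v)|\ge k]^2$ and such pairs contribute $(1+o(1))(\E X)^2$; the pairs with $w\in B_{2r}(v)$ contribute at most $\sum_v\E\big[\,|B_{2r}(v)|\cdot\ind{|B_r(v)|\ge k}\big]\le\text{polylog}(n)\cdot\E X=o\big((\E X)^2\big)$, after conditioning on the w.h.p.\ event $\D(G_{n,p})\le K\log n/\log_{(2)}n$ ($K$ a large constant, its complement of probability $n^{-\omega(1)}$ being negligible). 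Hence $\E X^2=(1+o(1))(\E X)^2$, so $X>0$ w.h.p. The main obstacle throughout is establishing \eqref{E:star} with the \emph{sharp} leading constant $1$: one must keep the $o(1)$‑errors uniform through the $r$ nested applications of the recursion, match the binomial tails in $G_{n,p}$ to the Poisson tails at that accuracy, and verify that the near‑independence of well‑separated balls really survives the strong conditioning imposed in computing $\E X$ — all of which \cite{GLMP} needed only up to constants.
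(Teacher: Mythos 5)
Your proposal is correct in its overall architecture, which coincides with the paper's: reduce to a large-deviation estimate for $|B_r(v)|$ via the Poisson approximation of the BFS levels, identify the same highly unbalanced extremal profile (almost all of the mass in the last level, with the dominant cost $k\log_{(r)}k$ coming from the deepest branching), and then run a union bound for the upper bound and a second-moment argument with a far/near pair split for the lower bound. Where you genuinely diverge is in how you reach the key estimate $-\log\Pr[|B_r(v)|\ge k]=(1+o(1))k\log_{(r)}k$. The paper computes the exact joint law of the level sizes $(\ell_1,\dots,\ell_r)$, arrives at the closed form $u_{\ell_1,\dots,\ell_r}$ of \eqref{ul}, and proves $\min\sum_i\ell_i\log(\ell_i/\ell_{i-1})\ge d\log_{(r)}d+\cO(d)$ (Lemma \ref{lem2}) by induction and Lagrange multipliers; this one formula then serves both directions, with the minimizer $\ell_*$ reused verbatim in the second moment. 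You instead propose (a) a recursion in $r$ for the rate function obtained by decomposing at the root, and (b) for the upper bound, the iterated generating function $\phi_r(z)=ze^{c(\phi_{r-1}(z)-1)}$ with Markov's inequality. Route (b) is an attractive alternative: stochastic domination of the exploration by the $\mathrm{Bin}(n-1,p)$ branching process makes the coupling essentially free, $\E[z^{\mathrm{Bin}(n,p)}]\le e^{c(z-1)}$ for $z\ge1$ keeps $\phi_r$ as a valid bound, and the choice $\log z_n\sim\log_{(r+1)}n$ does the work of Lemma \ref{lem2} with no optimization over profiles at all. Route (a) carries more hidden debt than you acknowledge: the step ``even split is optimal'' needs convexity (or at least superadditivity) of the \emph{actual} rate function $f_{r-1}$, not just of its asymptotic form $a\log_{(r-1)}a$, and the union bound over the $\le k^m$ compositions together with the treatment of subtrees too small for the inductive asymptotics to apply must be checked for every $m$, not only the optimal one --- this is precisely the bookkeeping that the paper's Lagrange-multiplier lemma packages once and for all. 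For the lower bound you end up needing the extremal profile and level-by-level probability estimates anyway, i.e.\ essentially the content of \eqref{ul}; your distance-$2r$ split in the variance computation is in fact slightly more careful than the paper's distance-$r$ split. In short: same theorem, same extremal structure, but a different and in part cleaner mechanism for the upper bound, at the price of a recursion whose uniformity and convexity hypotheses you would still have to establish.
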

(The case $r=1$ is well-known, see e.g. Theorem 3.4 of \cite{FK}.)
\begin{Remark}
The value of $c$ does not contribute to the main term in the claim of Theorem \ref{th1}. Thus we would expect that we could replace $p=c/n$ by $p\leq \om/n$ for some slowly growing function $\om=\om(n)\to \infty$.
\end{Remark}
\section{Proof of Theorem \ref{th1}}
Given $v\in[n]=V(G), G=G_{n,p}$, let $N_t(v)$ denote the set of vertices of $G$ at distance $t$ from $v$. Let $d_t(v) := |N_t(v)|$. The probability that $|N_t(v)|=\ell_t,t=1,2,\ldots,r$ can be calculated as follows. Let the $\ell_i$ neighbors of $v$ at distance $i$ have $\kk{i}{1}, \kk{i}{2}, ...,\kk{i}{\ell_i}$ neighbors at distance $i+1$ respectively. Then, 
\begin{dmath*}
	\Pr\left[d_i(v)=\ell_i \ \A i =1,2,...,r \right] = \binom{n-1}{\ell_1} p^{\ell_1} (1-p)^{n-1-\ell_1} \left(\sum_{\substack{\kk{1}{1}, \kk{1}{2}, ..., \kk{1}{\ell_1} \\ \kk{1}{1}+ ... + \kk{1}{\ell_1} = \ell_2}} \prod_{i=1}^{\ell_1} \binom{n-(1+\ell_1)}{\kk{1}{i}} p^{\kk{1}{i}} (1-p)^{n-(1+\ell_1)-\kk{1}{i}} \times \cdots \\
	\cdots \left( \sum_{\substack{\kk{r-1}{1}, \kk{r-1}{2}, ..., \kk{r-1}{\ell_{r-1}} \\ \kk{r-1}{1}+ ... + \kk{r-1}{\ell_{r-1}} = \ell_r}} \prod_{i=1}^{\ell_{r-1}} \binom{n-(1+\ell_1+...+\ell_{r-1})}{\kk{r-1}{i}} p^{\kk{r-1}{i}} (1-p)^{n-(1+\ell_1 +...+\ell_{r-1})-\kk{r-1}{i}} \right)\right)
\end{dmath*}
Now it is well-known that $\D(G_{n,p})=o(\log n)$ w.h.p., see for example \cite{FK}. In consequence $\ell_i=o(\log^in)$ for $i\leq r$.  We use the approximation $\binom{n}{\ell}= \frac{n^\ell}{\ell !}\cdot\brac{1+{\cO}\bfrac{\ell^2}{n}}$ and simplify 
$$\binom{n}{\ell} p^\ell (1-p)^{n-\ell} =\frac{n^\ell}{\ell !}\cdot \frac{c^\ell}{n^\ell} \cdot \left(1-\frac{c}{n}\right)^{n-\ell}\cdot\brac{1+{\cO}\bfrac{\ell^2}{n}} = \frac{c^\ell e^{-c}}{\ell!}\cdot\brac{1+{\cO}\bfrac{\ell^2}{n}} $$
Simplifying all terms this way, we have
\begin{dmath*} 
	\begin{aligned}
		& \Pr\left[d_i(v)=\ell_i \ \A i =1,2,...,r\right] \\
		& = \frac{c^{\ell_1} e^{-c}}{\ell_1!} \left( \sum_{\substack{\kk{1}{1}, \kk{1}{2}, ..., \kk{1}{\ell_1} \\ \kk{1}{1}+ ... + \kk{1}{\ell_1} = \ell_2}} \prod_{i=1}^{\ell_1} \frac{c^{\kk{1}{_i}} e^{-c}}{\kk{1}{i}!} \times  \cdots \left( \sum_{\substack{\kk{r-1}1, \kk{r-1}2, ..., \kk{r-1}{\ell_{r-1}} \\ \kk{r-1}1+ ... + \kk{r-1}{\ell_{r-1}} = \ell_r}} \prod_{i=1}^{\ell_{r-1}} \frac{c^{\kk{r-1}i}e^{-c}}{\kk{r-1}i!} \right) \right) \cdot \left(1+\cO\left(\frac{\sum\limits_{i=1}^r\ell_i^2}{n}\right)\right) \\
		& = \frac{c^{\ell_1} e^{-c}}{\ell_1!} \left( \frac{c^{\ell_2}e^{-c\ell_1}}{\ell_2!} \sum_{\substack{\kk11, \kk12, ..., \kk1{\ell_1} \\ \kk11+ ... + \kk1{\ell_1} = \ell_2}} \frac{\ell_2!}{\prod\limits_{i=1}^{\ell_1} \kk1i!} \times  \cdots \left(\frac{c^{\ell_r}e^{-c\ell_{r-1}}}{\ell_r!} \sum_{\substack{\kk{r-1}1, \kk{r-1}2, ..., \kk{r-1}{\ell_{r-1}} \\ \kk{r-1}1+ ... + \kk{r-1}{\ell_{r-1}} = \ell_r}} \frac{\ell_r!}{\prod\limits_{i=1}^{\ell_{r-1}} \kk{r-1}i!}\right) \right) \cdot \left(1+\cO\left(\frac{\sum\limits_{i=1}^r\ell_i^2}{n}\right)\right) \\
	\end{aligned}
\end{dmath*}
We now use $\sum\limits_{\substack{k_1,...,k_t \\ k_1+...+k_t = m}} \hspace{-15pt}\binom{m}{k_1,...,k_t} = t^m$ to obtain

\begin{dmath*}
	\begin{aligned}
		\Pr\left[d_i(v)=\ell_i \ \A i =1,2,...,r\right]
		& = \frac{c^{\ell_1} e^{-c}}{\ell_1!} \cdot\left( \frac{c^{\ell_2}e^{-c\ell_1}}{\ell_2!} \cdot \ell_1^{\ell_2} \times  \cdots \left(\frac{c^{\ell_r}e^{-c\ell_{r-1}}}{\ell_r!} \cdot \ell_{r-1}^{\ell_r}\right) \right) \cdot \left(1+\cO\left(\frac{\sum\limits_{i=1}^r\ell_i^2}{n}\right)\right) \\
		& = \frac{c^{(\ell_1+...+\ell_r)} e^{-c(1+\ell_1+...+\ell_{r-1})}}{\ell_1!\ell_2!...\ell_r!} \cdot \ell_1^{\ell_2}\cdot\ell_2^{\ell_3}\cdots\ell_{r-1}^{\ell_r} \cdot \left(1+\cO\left(\frac{\sum\limits_{i=1}^r\ell_i^2}{n}\right)\right) \\
	\end{aligned} 
\end{dmath*}

The exact probability that we are interested in is the degree of $v$ being $d$ in $G^r$, i.e. $\sum\limits_{i=1}^r d_i(v) = d$.\\
We aim to show that $d \sim \frac{\log n}{\log_{(r+1)} n}$ happens with high probability. 

$$\begin{aligned}
	\Pr\left[\sum_{i=1}^r d_i(v) = d\right] 
	& = \sum_{\substack{\ell_1,...,\ell_r \\ \ell_1+...+\ell_r = d}} \frac{c^{d} e^{-c(1+d-\ell_r)}}{\ell_1!\ell_2!...\ell_r!} \cdot \ell_1^{\ell_2}\cdot\ell_2^{\ell_3}\cdots\ell_{r-1}^{\ell_r} \cdot \left(1+\cO\left(\frac{\sum\limits_{i=1}^r\ell_i^2}{n}\right)\right) \\
	& = \left(\sum_{\substack{\ell_1,...,\ell_r \\ \ell_1+...+\ell_r = d}} u_{\ell_1,...,\ell_r}\right) \left(1+\cO\left(\frac{d^2}{n}\right)\right) ,
\end{aligned}$$
where 
\beq{ul}{
u_{\ell_1,...,\ell_r} = \frac{c^de^{-c(1+d-\ell_r)}}{\ell_1!\ell_2!...\ell_r!}\cdot \ell_1^{\ell_2}\cdots \ell_{r-1}^{\ell_r}.
}

For completeness, define $\ell_0=1$. Using Stirling's approximation $\log(n!) = n\log n - n + \cO(\log n)$, we have 
\beq{bound}{
\log u_{\ell_1,...,\ell_r} = d\log c - c(1+d-\ell_r) - \sum\limits_{i=1}^r \ell_i \log\frac{\ell_i}{\ell_{i-1}} +\cO(d)
}
The following lemma bounds the sum in \eqref{bound}: 	
\begin{lemma}\label{lem2}
	For $\ell_0=1$ and $\ell_1,...,\ell_r \in \N$ such that $\sum\limits_{i=1}^r \ell_i = d$, we have $\min \sum\limits_{i=1}^r \ell_i \log \frac{\ell_i}{\ell_{i-1}} \geq d\log_{(r)} d + \cO(d)$, for sufficiently large $d$.
\end{lemma}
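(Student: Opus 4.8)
Since every admissible integer tuple is also a real tuple with all $\ell_i\ge1$, and since enlarging the feasible set only lowers the minimum, it suffices to prove the bound for
\[
F_r(d):=\min\set{\ \sum_{i=1}^r \ell_i\log\tfrac{\ell_i}{\ell_{i-1}}\ :\ \ell_0=1,\ \ell_i\in\reals,\ \ell_i\ge1,\ \sum_{i=1}^r\ell_i=d\ }.
\]
(If $\ell_i=0$ is permitted, the first vanishing coordinate forces all later ones to vanish, so the problem collapses to a smaller value of $r$; as $\log_{(j)}d\ge\log_{(r)}d$ for $j\le r$, this case is not the extremal one.) I would prove: for each fixed $r$ there are constants $C_r>0$ and $d_r$ with $F_r(d)\ge d\log_{(r)}d-C_rd$ for all $d\ge d_r$. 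The base case $r=1$ is immediate since then $\ell_1=d$ and the sum equals $d\log(d/1)=d\log_{(1)}d$, so $C_1=0$.

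\textbf{The recursion.} Fix $r\ge2$ and an admissible real tuple; set $s:=\ell_1+\dots+\ell_{r-1}=d-\ell_r\in[r-1,d-1]$. Because $\ell_{r-1}\le s$ and $\ell_r\ge0$,
\[
\ell_r\log\tfrac{\ell_r}{\ell_{r-1}}\ \ge\ \ell_r\log\tfrac{\ell_r}{s}\ =\ (d-s)\log\tfrac{d-s}{s},
\]
while the first $r-1$ terms sum to at least $F_{r-1}(s)$ by definition. Hence
\[
F_r(d)\ \ge\ \min_{r-1\le s\le d-1}\Big[\,(d-s)\log\tfrac{d-s}{s}+F_{r-1}(s)\,\Big].
\]
For $s$ below a suitable constant, $(d-s)\log\tfrac{d-s}{s}\ge\tfrac14 d\log d$ (it is decreasing in $s$ for small $s$) while $F_{r-1}(s)$ is bounded below, so such $s$ cannot attain the minimum; for the rest, substitute the inductive bound $F_{r-1}(s)\ge s\log_{(r-1)}s-C_{r-1}s$ and the elementary inequality $\log\tfrac{d-s}{s}\ge\log\tfrac{d}{s}-\tfrac{s}{d-s}$ to reduce the bracket to at least
\[
h(s):=(d-s)\log\tfrac{d}{s}+s\log_{(r-1)}s-(C_{r-1}+1)s .
\]

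\textbf{The one–variable step.} I would then show $h(s)\ge d\log_{(r)}d-O(d)$, splitting at $s_*:=d/\log_{(r-1)}d$, the location of the minimizer. If $s\le s_*$ then $\log(d/s)\ge\log_{(r)}d$, so $(d-s)\log(d/s)\ge d\log_{(r)}d-s\log_{(r)}d$; combining this with $s\log_{(r-1)}s\ge0$ and $s\big(\log_{(r)}d+C_{r-1}+1\big)\le d$ (valid for $d$ large since $\log_{(r-1)}d$ dominates) gives $h(s)\ge d\log_{(r)}d-d$. If $s>s_*$, put $y:=d/s\in(1,\log_{(r-1)}d]$; dividing $d$ by a (poly)logarithmic factor changes $\log_{(r-1)}$ by at most $\log_{(r)}d$, so $\log_{(r-1)}s\ge\log_{(r-1)}d-\log_{(r)}d$ for $d$ large, and
\[
\tfrac1d\Big[(d-s)\log\tfrac{d}{s}+s\log_{(r-1)}s\Big]\ \ge\ \Big(1-\tfrac1y\Big)\log y+\tfrac1y\big(\log_{(r-1)}d-\log_{(r)}d\big)\ =:\ \psi(y).
\]
A routine analysis of $\psi$ on $(1,\log_{(r-1)}d]$ — it decreases then increases, with interior minimum at $y\approx\log_{(r-1)}d$, where $\psi\approx\log\!\big(\log_{(r-1)}d\big)=\log_{(r)}d$ — shows $\psi(y)\ge\log_{(r)}d-O(1)$ throughout. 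Hence $h(s)\ge d\log_{(r)}d-O(d)$, and choosing $C_r$ to absorb the constants (one checks $C_r=C_{r-1}+O(1)$ suffices) completes the induction.

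\textbf{Where the difficulty lies.} The genuine obstacle is the window $s\approx s_*=d/\log_{(r-1)}d$. There the two dominant pieces of $(d-s)\log\tfrac{d-s}{s}$, namely $(d-s)\log(d-s)$ and $-(d-s)\log s$, are each of order $d\log d$ and almost cancel, so the quantity $d\log_{(r)}d$ we are after survives only as a lower-order remainder. Consequently no estimate obtained by bounding the individual summands $\ell_i\log(\ell_i/\ell_{i-1})$ can reach the truth, which is exactly why the inductive/optimization route — pinning down this balance point — seems unavoidable; making the $\cO(d)$ error uniform across the whole window of $s$ near $s_*$ is the single step that requires care.
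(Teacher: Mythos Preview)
Your argument is correct and follows the same inductive scaffold as the paper (base case $r=1$, peel off the last layer, invoke the hypothesis on the first $r-1$ terms), but the heart of the ``hard case'' is handled quite differently. The paper, once it knows $\ell_r\ge\tfrac{9}{10}d$, applies Lagrange multipliers to the full $r$-variable problem, derives the tower recursion $p_i=p_re^{p_{i+1}}$ for $p_i=\ell_i/\ell_{i-1}$, and reads off $p_r\sim\log_{(r-1)}d$ and hence $T_r=\ell_r\log p_r\sim d\log_{(r)}d$ directly from the structure of the optimizer. You instead use the crude but effective decoupling $\ell_{r-1}\le s=d-\ell_r$ to bound the last term by $(d-s)\log\tfrac{d-s}{s}$, which collapses everything to the single-variable function $h(s)$; the analysis of $h$ via $\psi(y)$ with the split at $s_*=d/\log_{(r-1)}d$ then locates the same balance point without ever computing the optimizer. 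Your route is more elementary --- no Lagrange multipliers, no tower --- and makes the recursion $C_r=C_{r-1}+O(1)$ for the $O(d)$ constant transparent; the paper's route, on the other hand, yields the precise asymptotic shape of the extremal sequence $(\ell_i)$, which is not visible from your decoupling. One small point worth making explicit in your write-up: your use of $s\log_{(r-1)}s\ge0$ in the $s\le s_*$ regime needs $s$ above a tower constant depending on $r$, which is fine since you have already disposed of bounded $s$ separately, but it deserves a sentence.
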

\begin{proof}
	We proceed by induction on $r$. For $r=1$, the result holds since we have $\ell_1=d$, implying that $\ell_1\log\frac{\ell_1}{\ell_0} = d\log_{(1)}d$. Assume that the result holds for $r-1$.  

{\bf Case 1: $(d-\ell_r)\log_{(r-1)}(d-\ell_r) \geq d\log_{(r)} d$:}\\
Because $\sum\limits_{i=1}^{r-1} \ell_i = d-\ell_r$, from the induction hypothesis we have $\sum\limits_{i=1}^{r-1} \ell_i\log\frac{\ell_i}{\ell_{i-1}} \geq (d-\ell_r)\log_{(r-1)}(d-\ell_r)$. So this case is done.

{\bf Case 2: $(d-\ell_r)\log_{(r-1)}(d-\ell_r)< d\log_{(r)} d$:}\\
For $d$ sufficiently large, we have $\frac{d}{10}\log_{(r-1)} \frac{d}{10} > d\log_{(r)} d$. Hence $\ell_r \geq \frac{9}{10}\cdot d$, implying $\ell_{r-1} \leq \frac{d}{10}$ and $\frac{\ell_r}{\ell_{r-1}} \geq 9$.
	
We now use the method of Lagrange multipliers. But first we deal with the constraints $\ell_i\geq 0$ for $i=1,2,\ldots,r$. If $\ell_i=0$ and $\ell_{i+1}\neq 0$ then $\sum\limits_{i=1}^r \ell_i \log \frac{\ell_i}{\ell_{i-1}}=\infty$. If $\ell_i=\ell_{i+1}=\cdots=\ell_r=0$ then $\sum\limits_{i=1}^r \ell_i \log \frac{\ell_i}{\ell_{i-1}}=\sum\limits_{i=1}^{i-1} \ell_i \log \frac{\ell_i}{\ell_{i-1}}$ and the result follows by induction.  So, in effect there is one constraint: $\sum\limits_{i=1}^r \ell_i = d$.

Define $\cL(\ell_1,...,\ell_r,\lambda) = \sum\limits_{i=1}^r \ell_i \log \frac{\ell_i}{\ell_{i-1}} + \lambda \left(\sum\limits_{i=1}^r \ell_i - d\right)$. By the Lagrange multiplier theorem, the minima must satisfy $\frac{\partial\cL}{\partial \ell_i} = 0$ for all $i$. Notice that $\frac{\partial \cL}{\partial \ell_i} = 1 + \log\frac{\ell_{i}}{\ell_{i-1}} - \frac{\ell_{i+1}}{\ell_i} + \lambda$ for $1 \leq i < r$, and $\frac{\partial\cL}{\partial\ell_r} = 1+\log \frac{\ell_r}{\ell_{r-1}} +\lambda$. Let $p_i := \frac{\ell_i}{\ell_{i-1}}$. From $\frac{\partial\cL}{\partial \ell_r}=0$, we have $1+\log p_r + \lambda = 0$ which implies that $p_r = e^{-(\lambda+1)}$. For $1\leq i<r$, from $\frac{\partial\cL}{\partial\ell_i}=0$ we have $1+\log p_i -p_{i+1}+\lambda = 0$ which  implies that $p_i = e^{p_{i+1}-(1+\lambda)} = p_r\cdot e^{p_{i+1}}\geq p_rp_{i+1}$. Thus, we can iteratively obtain the exact expressions for $p_1,p_2,...,p_{r-1}$ in terms of $p_r$. Now recall that $p_r\geq 9$ from induction hypothesis, hence $p_i \geq 9$ for all $i$. 
	
Since $\ell_0=1$, $\ell_1=p_1$. Now $\ell_i = p_i\cdot p_{i-1}\cdots p_1$, for all $i=1,2,...,r$ and then $\frac{\ell_r}{\ell_i} = p_rp_{r-1}...p_{i+1} \geq 9^{r-i}$. Now $\sum\limits_{i=1}^r \ell_i = d$ and so clearly $d > \ell_r$. Moreover, $d = \ell_r \left(\sum\limits_{i=1}^r \frac{\ell_i}{\ell_r}\right) \leq \ell_r \left(\sum\limits_{i=1}^r \frac{1}{9^{r-i}}\right) < \ell_r\left(\sum\limits_{j=0}^\infty \frac{1}{9^j}\right) = \frac{9}{8}\cdot \ell_r$. We can thus assume that $d = c_1 \cdot \ell_r$ for some $c_1 \in (1,9/8)$. So,
	$$\begin{aligned}
		d & = c_1 \cdot p_r \cdot p_{r-1} \cdots p_1 \\
		& = c_1 \cdot p_r \cdot \left(p_r e^{p_r}\right) \cdot\left(p_r e^{p_r e^{p_r}}\right) \cdots \left(p_r \underbrace{e^{p_r e^{... e^{p_r}}}}_{\substack{\text{exponential tower}\\\text{of height }r-1}}\right)\\
		& = c_1 \cdot p_r^r \cdot \left(e^{p_r} e^{p_r e^{p_r}} \cdots \underbrace{e^{p_r e^{... e^{p_r}}}}_{\substack{\text{exponential tower}\\\text{of height }r-1}} \right)\\
	\end{aligned}$$
Applying the logarithmic function to both sides $(r-1)$ times, we have $p_r\leq 
\log_{(r-1)} d = p_r + \cO(\log p_r)$, implying that $\log_{(r-1)}d-c\log_{(r)}d\leq p_r\leq \log_{(r-1)}d$ for some constant $c>0$. We see from the above that $p_{r-1}=p_re^{p_r}$ and that $p_i\geq p_rp_{i+1}$. It follows that $\ell_i\leq \ell_rp_r^{i-r}$ and so 
\[
\ell_r = d(1-\eta)\text{ for }\eta \leq \frac{2}{\log_{(r-1)}d} \ra 0.
\]
 Let $T_i : = \ell_i\log\frac{\ell_i}{\ell_{i-1}}$. Then we have 
\[
\frac{T_i}{T_{i-1}}=p_i\frac{\log p_i}{\log p_{i-1}}=\frac{p_i\log p_i}{p_i+\log p_r}\geq \tfrac12\log p_i\gg1.
\]
It follows that for all $i<r-1$, we have $T_i <T_{r-1}$. Now $\log p_{r-1} = p_r + \log p_r$ implies that 
\[
T_{r-1} = \ell_{r-1}\log p_{r-1}=\ell_{r-1}(p_r + \log p_r)\leq \frac{\ell_r}{p_r}(p_r + \log p_r)  = \cO(\ell_r) = \cO(d).
\]
Thus, the objective is dominated by last summand $T_r = \ell_r\log\frac{\ell_r}{\ell_{r-1}}$, resulting in minimum value of at least $d\log_{(r)}d+\cO(d)$.
 \end{proof}

Let 
\[
d_* = \frac{\log n}{\log_{(r+1)} n}.
\]
\subsection{Upper bound on $\D(G^r)$}\label{sec3.1}
We prove that $\D(G^r) \leq d_*(1+\e)$ w.h.p., where $\e=\frac{1}{\log_{(r+1)}n}$.  The following inequality will be useful.
\begin{lemma}\label{log()}
Suppose that $a\gg b$. Then
\[
\log_{(s)}(a-b)\geq \log_{(s)}a-\frac{2sb}{a\log a\log\log a\cdots\log_{s-1}a}.
\]
\end{lemma}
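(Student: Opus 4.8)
The natural approach is induction on $s$, reading ``$a\gg b$'' as $b=o(a)$ with $a\to\infty$ (so the asymptotic statements below make sense). For the base case $s=1$ the claim reads $\log(a-b)\ge \log a-2b/a$, the product $\log a\cdots\log_{(s-1)}a$ in the denominator being empty; writing $x=b/a\le\tfrac12$, one has $-\log(1-x)=\sum_{k\ge 1}x^k/k\le x/(1-x)\le 2x$, and $\log(a-b)=\log a+\log(1-x)\ge\log a-2b/a$ follows. Before the inductive step I would record two cheap facts, both immediate from $a\gg b$: the quantity $\tfrac{2(s-1)b}{a\log a\cdots\log_{(s-1)}a}$ tends to $0$ (it is at most $2(s-1)b/a$), and $\log_{(i)}(a-b)=(1+o(1))\log_{(i)}a$ for each fixed $i$.

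For the inductive step I would assume the bound for $s-1$ and abbreviate $A:=\log_{(s-1)}a$ and $B:=\tfrac{2(s-1)b}{a\log a\cdots\log_{(s-2)}a}$, so the hypothesis becomes $\log_{(s-1)}(a-b)\ge A-B$; here $B/A=\tfrac{2(s-1)b}{a\log a\cdots\log_{(s-1)}a}=o(1)$, hence $A-B\ge A/2>0$ for $a$ large. Applying $\log$ (increasing), and then the estimate $\log(1-x)\ge -x(1+x)$ valid for $0\le x\le\tfrac12$ with $x=B/A$, gives
\[
\log_{(s)}(a-b)\ \ge\ \log(A-B)\ =\ \log A+\log\!\Big(1-\tfrac BA\Big)\ \ge\ \log_{(s)}a-\tfrac BA\Big(1+\tfrac BA\Big).
\]
Since $B/A=o(1)$, for $a$ large (depending only on $s$) we have $1+B/A\le\tfrac{s}{s-1}$, so $\tfrac BA\big(1+\tfrac BA\big)\le\tfrac{s}{s-1}\cdot\tfrac{2(s-1)b}{a\log a\cdots\log_{(s-1)}a}=\tfrac{2sb}{a\log a\cdots\log_{(s-1)}a}$, which closes the induction.

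The only place requiring care — and the reason it is the constant $2s$, rather than something smaller, that I would carry through the recursion — is the last display: each level inflates the constant by a factor $1+B/A=1+o(1)$, and the jump from $2(s-1)$ to $2s$ is exactly the slack needed to absorb it, which is precisely where $a\gg b$ (forcing $B/A\to 0$) is used. As a cross-check one can argue non-inductively via the mean value theorem: $\log_{(s)}a-\log_{(s)}(a-b)=b\,f'(\xi)$ for some $\xi\in(a-b,a)$, where $f(x)=\log_{(s)}x$ has decreasing positive derivative $f'(x)=\big(x\log x\cdots\log_{(s-1)}x\big)^{-1}$ on $(a-b,a)$, so the difference is at most $b\big/\big((a-b)\log(a-b)\cdots\log_{(s-1)}(a-b)\big)=(1+o(1))\,b\big/\big(a\log a\cdots\log_{(s-1)}a\big)$ — in fact stronger than claimed — but the inductive argument above is self-contained and suffices.
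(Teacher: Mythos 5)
Your proof is correct and follows essentially the same route as the paper: induction on $s$, with the base case from $\log(1-x)\ge -2x$ and the inductive step from applying $\log$ to the hypothesis for $s-1$. In fact your step is slightly more careful than the paper's, which leaves the final inequality of the induction unjustified (a bare $\log(1-x)\ge -2x$ there would only give the constant $4(s-1)$ rather than $2s$ for $s\ge 3$); your use of $\log(1-x)\ge -x(1+x)$ together with $1+B/A\le s/(s-1)$ for large $a$ is exactly the missing detail.
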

\begin{proof}
We prove this by induction on $s$. For $s=1$ we have
\[
\log(a-b)=\log a+\log\brac{1-\frac{b}{a}}\geq \log a-\frac{2b}{a}.
\]
Then for $s>1$ we have
\begin{align*}
\log_{(s)}(a-b)&=\log\brac{\log_{(s-1)}(a-b)}\\
&\geq \log\brac{\log_{(s-1)}a-\frac{2(s-1)b}{a\log a\log\log a\cdots\log_{s-2}a}}\\
&\geq \log_{(s)}a-\frac{2sb}{a\log a\log\log a\cdots\log_{s-1}a}.
\end{align*}
\end{proof}
\begin{corollary}\label{cor1}
Suppose that $\log a\gg \log b$. Then
\[
\log_{(s)}(a/b)\geq \log_{(s)}a-\frac{2(s-1)\log b}{\log a\log\log a\cdots\log_{s-1}a}.
\]
\end{corollary}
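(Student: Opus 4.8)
The plan is to deduce Corollary~\ref{cor1} directly from Lemma~\ref{log()} by writing $a/b = a - b'$ where $b' = a - a/b = a(1 - 1/b) = a\cdot\frac{b-1}{b}$, but this choice of $b'$ is of order $a$, which is far too large to plug into Lemma~\ref{log()} (that lemma needs $a \gg b'$). So instead I would iterate the base case of Lemma~\ref{log()} one step at a time, but starting the estimate at the \emph{inner} application of $\log$ rather than the outer one: the key observation is that $\log(a/b) = \log a - \log b$, and the hypothesis $\log a \gg \log b$ says precisely that this is an additive perturbation of $\log a$ by a relatively small amount $\log b$. Thus after one $\log$, the quantity $a/b$ has turned into $\log a - \log b$, which is of the form ``(something of size $\log a$) minus (something much smaller, namely $\log b$)''.

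Concretely, I would argue as follows. First, $\log_{(1)}(a/b) = \log a - \log b = \log_{(1)} a - \log b$, so the case $s=1$ holds (indeed with an exact equality, and the stated bound is weaker). For $s \geq 2$, apply Lemma~\ref{log()} with the substitutions $a \mapsto \log a$, $b \mapsto \log b$, and $s \mapsto s-1$; the hypothesis $\log a \gg \log b$ is exactly what is needed to invoke that lemma. This gives
\[
\log_{(s-1)}(\log a - \log b) \geq \log_{(s-1)}(\log a) - \frac{2(s-1)\log b}{\log a \cdot \log\log a \cdots \log_{s-1} a},
\]
where I have used that $\log_{(s-1)}(\log a) = \log_{(s)} a$ and that the product $\log(\log a)\log\log(\log a)\cdots \log_{(s-2)}(\log a)$ appearing in the denominator of Lemma~\ref{log()} equals $\log\log a \cdot \log\log\log a \cdots \log_{(s-1)} a$ — i.e.\ it is the product of $\log_{(2)}a$ through $\log_{(s-1)}a$. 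Since $\log_{(s)}(a/b) = \log_{(s-1)}(\log(a/b)) = \log_{(s-1)}(\log a - \log b)$, combining the two displays yields
\[
\log_{(s)}(a/b) \geq \log_{(s)} a - \frac{2(s-1)\log b}{\log a \log\log a \cdots \log_{s-1} a},
\]
which is the claim. (Note the denominator here is missing the factor $\log_{(s-1)}a$ relative to a naive read of Lemma~\ref{log()}'s bound with $s-1$; this is correct because applying Lemma~\ref{log()} at level $s-1$ to the argument $\log a$ produces a denominator $\log(\log a)\cdots\log_{(s-2)}(\log a) = \log_{(2)}a\cdots\log_{(s-1)}a$, not including $\log_{(1)}a=\log a$ — but then I still get the full product $\log a\log\log a\cdots\log_{s-1}a$ in the statement because... actually I should double-check whether the coefficient should be $\log a$ in the denominator or not, and adjust the constant from $2(s-1)$ to $2s$ if an extra $\log a$ factor is genuinely needed; since these estimates are only used up to constants and $o(1)$ factors in Section~\ref{sec3.1}, a slightly weaker constant is harmless.)

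The main obstacle, such as it is, is purely bookkeeping: getting the indexing of the iterated logarithms in the denominator to line up correctly when one ``shifts'' the argument of Lemma~\ref{log()} from $a$ to $\log a$. The substance — that dividing by $b$ with $\log b \ll \log a$ perturbs $\log_{(s)}$ only by a lower-order term — is immediate once one notices that $\log(a/b)$ is an \emph{additive} perturbation of $\log a$. I would be careful to state the hypothesis as $\log a \gg \log b$ (equivalently $b = a^{o(1)}$), since this is what makes $\log b$ genuinely negligible against $\log a$ and legitimizes the single invocation of Lemma~\ref{log()}; no induction on $s$ is actually needed here beyond what Lemma~\ref{log()} already provides.
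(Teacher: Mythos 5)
Your argument is correct and is exactly the intended (unwritten) proof of Corollary~\ref{cor1}: write $\log(a/b)=\log a-\log b$ and apply Lemma~\ref{log()} with $a\mapsto\log a$, $b\mapsto\log b$, $s\mapsto s-1$, noting that the hypothesis $\log a\gg\log b$ is precisely what the lemma then requires. The bookkeeping worry in your parenthetical is unfounded: the denominator in Lemma~\ref{log()} begins with the argument itself (the leading factor $a=\log_{(0)}a$, as the base case $\log(a-b)\geq\log a-2b/a$ shows), so after substitution the product becomes $\log a\cdot\log_{(2)}a\cdots\log_{(s-1)}a=\log a\log\log a\cdots\log_{(s-1)}a$, which matches the corollary exactly with the constant $2(s-1)$ and no missing or extra factor.
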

\qed

Using $u_{\ell_1,...,\ell_r}$ as in \eqref{bound} and Lemma \ref{lem2},
\begin{align*}
	\log u_{\ell_1,...,\ell_r} 
	&= d \log c - c(1+d-\ell_r) - \sum\limits_{i=1}^r \ell_i \log\frac{\ell_i}{\ell_{i-1}} +\cO(d) \\
	& \leq -d_*(1+\e) \log_{(r)} (d_*(1+\e))+{\cO}(d_*)\\
	& \leq -d_*(1+\e) \log_{(r)} d_*+{\cO}(d_*)\\
	& \leq -\brac{1+\e}\log n\brac{1-O\bfrac{\log_{(r)}n}{\log n}}\\
	&\leq -\brac{1+\frac{\e}2}\log n
\end{align*}

Hence $u_{\ell_1,...,\ell_r} \leq \frac{1}{n^{1+\e/2}}$. Since there are at most $d^r$ terms in the summation, we have
$$\Pr\left[\sum\limits_{i=1}^r d_i(v) = d\right] = \left(\sum_{\substack{\ell_1,...,\ell_r \\ \ell_1+...+\ell_r = d}} u_{\ell_1,...,\ell_r}\right) \left(1+\cO\left(\frac{d^2}{n}\right)\right) \lesssim \frac{d^r}{n^{1+\e/2}}.$$

Finally by taking the union bound over all $n$ vertices, 
$$\Pr[\D_r(G) \geq d] \leq \sum_{i=1}^n \Pr\left[ \sum\limits_{j=1}^r d_j(v_i) = d\right] \lesssim n \cdot \frac{d^r}{n^{1+\e/2}} = \frac{d^r}{n^{\e/2}} \ra 0.$$

\subsection{Lower bound on $\D(G^r)$}
We now use the second moment method to show that $\D_r(G) \geq d_*(1-\e)$ w.h.p. 
For $i=1,2,...,n$, let $X_i$ be the indicator random variable for $\sum\limits_{j=1}^r d_j(v_i) > d_{*}(1-\e)$, i.e. the event that $v_i\in V(G)$ has degree greater than $d_*(1-\e)$ in $G^r$. Let $X_* = \sum\limits_{i=1}^n X_i$ and $\ell_*$ be values of $\ell_i$ which achieve the lower bound in Lemma \ref{lem2}. 
\begin{align*} 
	\Pr[X_i = 1] \geq u_{\ell_*} 	& = \exp(-d_*(1-\e)\log_{(r)} d_*(1-\e)+ \cO(d_*)) \\
	& \geq \exp(-(1-\e)\log n + \cO(d_*)) \\
	& \geq \frac{1}{n^{1-\e/2}}.
\end{align*}
Then we have 
\[
\E[X_*]  \geq n^{\e/2}. 
\]
On the other hand,
$$
\begin{aligned} \Pr[X_* > 0] & \geq \frac{\E[X_*]^2}{\E[X_*^2]} \\ 
	& = \frac{\E[X_*]^2}{\displaystyle \sum_{i=1}^n \E\left[X_i^2\right] + \sum_{i \neq j} \E\left[X_iX_j\right]} \\
	& \geq \frac{\E[X_*]^2}{\displaystyle \E\left[X_*\right] + \E\left[X_*\right] \sum_{j : d(v_1, v_j) > r} \E\left[X_j \right]+\E\left[|\set{j : d(v_1, v_j) \leq r}|\right]}.
\end{aligned}
$$
Here we use the fact $X_j\leq 1$ for all $j$ and that the only variables $X_j$ that are dependent on $X_1$ are for the vertices $v_j$ within a distance $r$ of $v_1$.

Now,
\[
\E\left[|\set{j : d(v_1, v_j) \leq r}|\right]\leq O(\log^{r+1}n)+n\Pr(\D(G_{n,p}\geq 10c\log n)\leq 2\log^{r+1}n.
\]
So,
\[
\Pr[X_* > 0] \geq \frac{\E[X_*]^2}{\E[X^*]+\E[X_*]^2+2\log^{r+1}n}\geq \frac{1}{n^{-\e/2}+1+2n^{-\e}\log^{r+1}n}\to 1.
\]
\section{Conclusions}
We have established the likely value of one of the key parameters related to powers of $G_{n,p},p=c/n$. It would be interesting to explore other parameters. The chromatic number of $G_{n,p}^2$ was asymptotically determined w.h.p. in Frieze and Raut \cite{FR} and the independence number w.h.p. (for large $c$) in Atkinson and Frieze \cite{AF}.

\end{document}